\newtheorem{theorem}{Theorem}
\newtheorem{algorithm}[theorem]{Algorithm}
\newtheorem{question}[theorem]{Question}
\newtheorem{corollary}[theorem]{Corollary}
\newtheorem{conjecture}[theorem]{Conjecture}
\begin{document}

\title{The Pyro game: a slow intelligent fire}

\author{M.~E.~Messinger\thanks{Partially supported by NSERC Discovery Grant}, S.~Yarnell\thanks{Supported by a Mount Allison Independent Student Research Award.}}

\date{}

\maketitle

\begin{abstract}  
In the Firefighter problem, a fire breaks out at a vertex of a graph and at each subsequent time step, the firefighter chooses a vertex to protect and then the fire spreads from \emph{each} burned vertex to every unprotected neighbour.  The problem can be thought of as a simplified model for the spread of gossip or disease in a network.  We introduce a new two-player variation called the Pyro game, in which at each step, the fire spreads from \emph{one} burned vertex to all unprotected neighbours of that vertex.  The fire is no longer automated and aims to maximize the number of burned vertices.  We show, that unlike the Firefighter problem, one firefighter can contain a fire on the Cartesian grid in the Pyro game.  We also study both the Pyro Game and the Firefighter Problem on the infinite strong grid and   the complexity of the Pyro game.



\end{abstract}

\section{Introduction to the Pyro game}\label{sec:intro}

The Firefighter problem, introduced in 1995 by Harnell~\cite{hart} is a deterministic model of the spread of fire, gossip, or disease in a network.  Initially, a fire breaks out at a vertex in a graph, and this vertex is said to be \emph{burned}.  At each subsequent step, the firefighter chooses a vertex to \emph{protect} and then the fire acts without intelligence and spreads from each burned vertex to all unprotected neighbouring vertices.  Once a vertex is protected or burned, it remains in that state forever.  In a finite graph, the process terminates when the fire can no longer spread; at this time, any vertex (including protected vertices) that has not been burned is \emph{saved} and naturally, the goal for the firefighter is often to maximize the number of saved vertices.  On an infinite graph, we may either consider the surviving rate, which is the proportion of the vertices that saved when one firefighter protects one vertex at each step; or determine the minimum $k$ such that if $k$ vertices are protected at each time step, the process will terminate (i.e. such that there is some finite step after which no vertex will be burned).  See the survey~\cite{FinMac} or more recent advances in~\cite{Blum,Fomin,Garcia,Gav} for example.

We introduce a variation of the Firefighter problem where during each step, the fire chooses \emph{one} burned vertex and spreads from that vertex to all its unprotected neighbours.  Though the fire is ``slowed'', it is  no longer automated and can act with intelligence.  Formally, the \emph{Pyro game} is a two-player game, played on a connected graph.  At step $0$, the pyro player chooses a vertex and burns it.  At each subsequent time step, the firefighter player (which can be thought of as a set of $k \in \mathbb{N}$ firefighters) protects $k$ unburned vertices and then the pyro player chooses one burned vertex and burns \emph{from} that vertex to its unprotected, unburned neighbours of the vertex.  Once a vertex is burned or protected,  it will remain in that state for the duration of the game.  If the graph is finite, there will a finite step after which no vertex can be burned.  If the graph is infinite, this may or may not be the case.  As a result, we say a fire on an infinite graph can be {\bf contained} if the firefighter player plays optimally, there is some finite step after which no unprotected unburned vertex will be burned.   On a finite graph, the pyro player's goal is to burn as many vertices as possible, while the firefighter player's goal is to maximize the number of vertices saved (i.e. minimize the number of burned vertices).  On an infinite graph, the firefighter player's goal is to contain the fire whereas the pyro player's goal is to avoid being contained.  Henceforth, we will simply refer to the pyro player and the firefighter player as the pyro and the firefighter(s), respectively.

In terms of the maximum number of vertices that can be saved, the Pyro game and the Firefighter problem yield the same result for some graphs and very different results for others.  For example, in both problems the maximum number of vertices that can be saved on a path $P_n = \{v_1,v_2,\dots,v_n\}$ is $n-2$ if the initial burned vertex is a non-leaf and $n-1$ if the initial burned vertex is a leaf.  To do this, at each step, the firefighter simply protects a vertex adjacent to a burned vertex. On the other hand, the maximum number of vertices that can be saved in the Pyro game can also be very different than in the Firefighter problem.  Suppose that vertex $r$ of the graph shown in Figure~\ref{fig:spidey1} (a) is burned at time $t=0$.  Within the dotted lines, a clique on $7$ vertices has been drawn, but we imagine a clique on $m$ vertices (for some large integer $m$).  One can observe that the optimal move for the firefighter, in both the Firefighter problem and the Pyro game, is to protect a neighbour of $r$, say $v_1$, during step $t=1$.  An example of the Firefighter problem is shown in Figure~\ref{fig:spidey1} (b) and an example of the Pyro game is shown in Figure~\ref{fig:spidey1} (c); the white circled vertices indicate protected vertices while the large black circles (and the large black square) indicate burned vertices.

\begin{figure}[htbp]  \[ \includegraphics[width=0.6\textwidth]{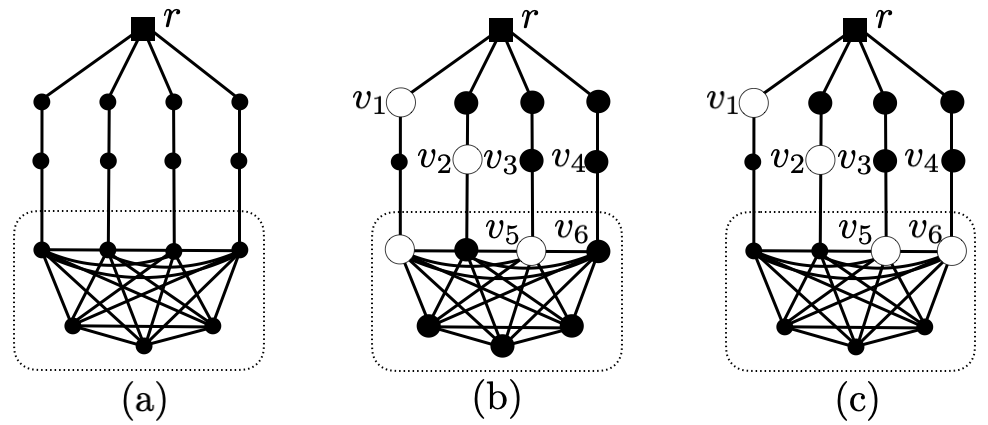} \] 
\vspace{-0.3in}\caption{An example where the maximum number of vertices saved for the Pyro game greatly exceeds that of the Firefighter problem.}
\label{fig:spidey1}
\end{figure}

In the Firefighter problem, the fire spreads to all unprotected neighbours of $r$ at the end of step $t=1$.  Suppose the firefighter protects vertex $v_2$ as shown in Figure~\ref{fig:spidey1} (b).  At the end of step $t=2$ the fire spreads to vertices $v_3$ and $v_4$.  At the beginning of step $t=3$, the firefighter can protect $v_5$, then the fire spreads to $v_6$.  Finally, the firefighter protects $x$, then the fire spreads to all unprotected vertices in $K_m$; this results in only $5$ of $m+9$ vertices being saved (see Figure~\ref{fig:spidey1} (b)).

In the Pyro game, the pyro burns from $r$ to all unprotected neighbours at the end of step $t=1$.  Suppose the firefighter then protects vertex $v_2$.  At the end of step $t=2$, the pyro spreads to either $v_3$ or $v_4$, suppose to $v_3$.  At the beginning of step $t=3$, the firefighter protects $v_5$, then the pyro spreads to $v_4$.  Finally, the firefighter protects $v_6$ and the pyro can no longer spread; this results in $m+3$ of $m+9$ vertices being saved.   (see Figure~\ref{fig:spidey1} (c)).  

The Pyro game is also closely related to Conway's famous Angel Game~\cite{Conway} and Epstein's game of Quadraphage (Square-Eater)~\cite{Gardner}.  The latter is played on a generalized $m \times n$ chessboard with two players.  The Chess player has a single chess piece, the King which starts at the center square of the board (or as close as possible if $mn$ is even).  The Chess player's object is to move the King piece to any square on the edge of the board while the object of the square-eater player is to prevent this from occurring.  The players alternate turns.  On the Chess player's turn, the King is moved to any non-eaten neighbouring square.  On the square-eater player's turn, any $q$ squares are eaten.  The game of Quadraphage with $q=1$ is equivalent to the well-known Angel Game with an angel of power 1.  With optimal play, the square-eater player wins the game of Quadraphage with $q=1$ on a board as small as $33 \times 33$; the proof was shown in~\cite{BCG} and nicely explained in~\cite{kutz}.  The Pyro game differs however, as the pyro can choose any burned vertex from which to burn (not simply the last vertex burned) and when the pyro burns from a vertex $x$, it does not simply spread to one neighbour of $x$, but to all unprotected unburned neighbours of $x$.  


In Section~\ref{sec:Complexity}, we determine the complexity of the Pyro game in terms of the maximum number of vertices that can be saved and show the decision problem is $\mathcal{NP}$-hard for bipartite graphs.  We then consider the Pyro game on the infinite Cartesian and strong grids.  In the Firefighter problem, one firefighter cannot contain a fire on the infinite Cartesian grid, but an average of slightly more than $\frac{3}{2}$ firefighters can.  In Section~\ref{sec:pyroCart}, we show that in the Pyro game, one firefighter can contain a fire on the infinite Cartesian grid.  In Section~\ref{sec:pyroStrong}, we consider the Pyro game on the infinite strong grid; we provide an algorithm for two firefighters to contain the fire.  For completeness, we also consider the Firefighter problem on the strong grid: we prove three firefighters cannot contain a fire on the infinite strong grid and provide an algorithm for four firefighters to contain the fire.

\section{Complexity of the Pyro game}\label{sec:Complexity}

Let $MSV_p(G,r)$ denote the maximum number of vertices of graph $G$ that can be saved if the pyro initially burns vertex $r$ and both players move optimally in the pyro game. By ``optimally'', we mean that the firefighter will aim to maximize the number of saved vertices and the pyro will aim to maximize the number of burned vertices (or equivalently, minimize the number of saved vertices).  Consider the following decision problem:\medskip

\noindent\line(1,0){470}  

{\small{\textbf{PYRO}}}  

\emph{Instance:} A rooted graph $(G,r)$ and an integer $k \geq 1$.  

\emph{Question:} Is $MVS_p(G,r) \geq k$? \vspace{0.05in}

\noindent\line(1,0){470}\bigskip \smallskip

Theorem~\ref{thm:complexity} establishes the problem is $\mathcal{NP}$-hard for bipartite graphs, which implies the result for general graphs.   The transformation is from {\small{\textbf{EXACT COVER BY 3-SETS}}} and the proof is similar to the proof of {\small{\textbf{FIREFIGHTER}}} in~\cite{MW} though the graph construction differs.\medskip

\noindent\line(1,0){470}  

{\small{\textbf{EXACT COVER BY 3-SETS (X3C)}}}  

\emph{Instance:} A set $X$ with $|X|=3q$ and a collection $\mathcal{C}$ of $3$-element subsets of $X$.  

\emph{Question:} Does $\mathcal{C}$ contain an exact cover for $X$? \vspace{0.05in}

\noindent\line(1,0){470}\medskip

\begin{theorem}\label{thm:complexity} {\small{\textbf{PYRO}}} is $\mathcal{NP}$-hard for bipartite graphs.
\end{theorem}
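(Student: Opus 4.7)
The plan is a polynomial-time reduction from {\small\textbf{EXACT COVER BY 3-SETS}} to {\small\textbf{PYRO}}. Given an instance $(X, \mathcal{C})$ with $|X| = 3q$ and $|\mathcal{C}| = m$, I would build in polynomial time a bipartite rooted graph $(G, r)$ and an integer $K$ and show that $MSV_p(G, r) \geq K$ if and only if $\mathcal{C}$ contains an exact cover of $X$.

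The graph $G$ will be organized around four kinds of vertices. The root $r$ is the initial fire. For each $C_i \in \mathcal{C}$ I introduce a \emph{set-vertex} $s_i$, and for each $x_j \in X$ an \emph{element-vertex} $e_j$; I link $s_i$ to $e_j$ (through an intermediate subdivision vertex to preserve bipartiteness) whenever $x_j \in C_i$. To each $e_j$ I attach a large \emph{penalty tree} $T_j$ on $N$ new vertices, with $N$ a polynomial in $m+q$, arranged so that once $e_j$ is burned, the pyro can steadily collapse $T_j$ regardless of how the firefighter responds, eventually burning $\Theta(N)$ vertices of $T_j$. Between $r$ and the $s_i$'s I insert an \emph{access gadget} that forces the fire to take exactly $q$ turns to reach the set-vertices and that cannot be severed by fewer than $q$ firefighter protections. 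The threshold $K$ is chosen just below the total vertex count, so that $MSV_p(G, r) \geq K$ occurs precisely when every element-vertex is saved.

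In the forward direction, given an exact cover $C_{i_1}, \ldots, C_{i_q}$, I would describe an explicit firefighter strategy: during the $q$ turns available while the fire crosses the access gadget, protect $s_{i_1}, \ldots, s_{i_q}$ one per turn. Once the fire reaches the remaining set-vertices, the pyro can only spread to an $e_j$ whose element $x_j$ lies in some \emph{unprotected} $C_i$, but every $x_j$ belongs to some $C_{i_l}$ whose set-vertex is protected; so no $e_j$ burns and no penalty tree is triggered, yielding $MSV_p(G, r) \geq K$. In the reverse direction, if $MSV_p(G, r) \geq K$, I would argue that no element-vertex can have burned, since otherwise the $\Theta(N)$-vertex penalty pushes the saved count below $K$. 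The access gadget limits the firefighter to at most $q$ useful protections of set-vertices, so the protected set-vertices $\{s_{i_1},\ldots, s_{i_q}\}$ must dominate every $e_j$ in the incidence structure; since each $C_{i_l}$ has only three elements and $|X|=3q$, this cover must be exact.

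The main obstacle will be the design of the access gadget. Because the pyro only spreads from one burned vertex per turn, the firefighter has considerably more flexibility than in {\small\textbf{FIREFIGHTER}}, so simple bottlenecks (a single cut-vertex, or a short path from $r$) are useless: one protection would sever the fire and save essentially everything. The access gadget must simultaneously (i) offer no cheap vertex cut separating $r$ from the set-vertices, so the firefighter cannot isolate the fire with $o(q)$ protections; (ii) concentrate the fire's advance into exactly $q$ turns, so the firefighter has neither too few nor too many moves before the $s_i$'s are exposed; and (iii) leave the pyro, who chooses which burned vertex to spread from, enough freedom to punish any incomplete covering by spreading from a set-vertex whose element-vertices are undefended. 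Tuning this gadget so that the firefighter's $q$ choices must correspond precisely to an exact cover, rather than merely to an arbitrary covering, is the crux of the argument and the essential new ingredient relative to the {\small\textbf{FIREFIGHTER}} reduction of~\cite{MW}.
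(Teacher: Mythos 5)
Your proposal is not yet a proof, and it has two concrete problems. First, you yourself identify the ``access gadget'' as the crux --- the device that must simultaneously prevent any cheap cut between $r$ and the set-vertices, meter the fire's advance to exactly $q$ firefighter turns, and force those $q$ protections to encode an exact cover --- and you leave it entirely undesigned. Without it there is no reduction to verify. Second, the forward direction is logically flawed as stated: you claim that if the firefighter protects the set-vertices of an exact cover $C_{i_1},\dots,C_{i_q}$, then ``every $x_j$ belongs to some $C_{i_l}$ whose set-vertex is protected; so no $e_j$ burns.'' But in your construction $e_j$ is adjacent (through subdivision vertices) to \emph{every} $s_i$ with $x_j \in C_i$, and the $|\mathcal{C}|-q$ set-vertices outside the cover are unprotected and will burn; the pyro then burns through their subdivision vertices to the element-vertices, triggering the penalty trees. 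Protecting one set containing $x_j$ does not save $e_j$; you would need to protect all sets containing $x_j$ (or block all those paths), which $q$ protections cannot do. So even granting a working access gadget, the claimed firefighter strategy does not achieve the threshold $K$, and the equivalence collapses.

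The paper's construction avoids both difficulties by dispensing with element-vertices and penalty trees altogether. It joins each $c_i$ to $r$ by a path of length $q$ and then completely joins consecutive distance layers, so the fire necessarily sweeps one full layer per step and no small vertex cut exists --- this \emph{is} the access gadget, and it gives the firefighter exactly $q$ moves before the $c_i$'s are reached. Disjointness is then rewarded rather than non-coverage punished: $c_i$ and $c_j$ are joined by $9q^4$ paths of length $2$ precisely when $C_i \cap C_j = \emptyset$, so the saved count is dominated by the middle vertices between pairs of \emph{protected} $c_i$'s, and reaching $k = q + \binom{q}{2}9q^4$ forces the $q$ protected vertices to correspond to pairwise disjoint $3$-sets, which (since $|X|=3q$) is an exact cover. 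If you want to salvage your element-vertex/penalty-tree architecture you must rework the incidence structure (e.g.\ so that saving an element requires only one protected set, as in constructions where element gadgets hang below set-vertices), but as written the argument does not go through.
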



\begin{proof}
Let $\mathcal{C} = \{C_1,C_2,\dots,C_{|\mathcal{C}|}\}$.  We construct a rooted bipartite graph $(G,r)$ and a positive integer $k$ such that at least $k$ vertices of $(G,r)$ can be saved if and only if there is an exact cover of $\mathcal{C}$.  

Begin with a root vertex $r$, and $|\mathcal{C}|$ vertices $c_1,c_2,\dots,c_{|\mathcal{C}|}$ (that represent the elements $C_1,C_2,\dots, C_{|\mathcal{C}|}$ of $\mathcal{C}$) each joined to $r$ by a path of length $q$.  For $i=1,2,\dots,q-1$, we add in edges so that every vertex distance $i$ from $r$ is adjacent to all vertices distance $i+1$ from $r$.  Finally, for each $i,j \in \{1,\dots,|\mathcal{C}|\}$ with $i \neq j$,  if $C_i \cap C_j = \emptyset$ then we join the corresponding vertices $c_i$ and $c_j$ by $9q^4$ paths of length $2$.  Let $P$ be the set of vertices added in this last step of the graph construction.

Without loss of generality, let $\mathcal{C}' = \{C_1,C_2,\dots,C_{|\mathcal{C}'|}\}$ be a subcollection of mutually disjoint $3$-sets of $X$ that is of maximum cardinality.


Suppose that $X$ contains an exact cover by $\mathcal{C}$.   Then every element of $X$ occurs in exactly one member of $\mathcal{C}'$ and $|\mathcal{C}'|=q$.  Without loss of generality, vertices $c_1,c_2,\dots,c_q$ represent the elements of $\mathcal{C}'$.  Since $C_1,C_2,\dots,C_q$ represent mutually disjoint sets, each pair of distinct vertices $c_i$, $c_j$ with $i,j \in \{1,\dots, q\}$ has $9q^4$ paths of length $2$ between them.  In total, there are $\binom{q}{2}(9q^4)$ paths of length $2$.  For $i \in \{1,2,\dots,q\}$, at step $i$, the firefighter protects vertex $c_i$.  There are at least $k=q+\binom{q}{2}(9q^4)$ vertices saved.

Suppose the number of vertices that can be saved is at least $k = q + {q \choose 2}(9q^4)$.  After $q$ steps, suppose $\ell < q$ vertices of $c_1,c_2,\dots,c_{|\mathcal{C}|}$ have been protected.  Then after $q$ steps, at most ${q-1 \choose 2} (9q^4) + (q-\ell)$ vertices in $P$ are protected.  So after $q$ steps, the maximum number of vertices that can be saved is $q+\tbinom{q-1}{2}9q^4 < k$.  As this yields a contradiction, we conclude exactly $q$ vertices of $c_1,c_2,\dots,c_{|\mathcal{C}|}$ are protected after $q$ steps.  Next, suppose that these $q$ vertices do not correspond to a collection of pairwise disjoint $3$-sets.  Then the maximum number of vertices saved is $q+\tbinom{q}{2} 9q^4 - 9q^4 < k$.  Therefore, the $q$ protected vertices must correspond to a collection of pairwise disjoint $3$-sets and $|\mathcal{C}'|=q$.\end{proof} 



\section{The infinite Cartesian grid} \label{sec:pyroCart}

In the Firefighter problem, one firefighter cannot contain a fire on the infinite Cartesian grid~\cite{WangMoeller}.  It was shown in~\cite{3/2} that if an average of $\tfrac{3}{2}+\tfrac{1}{3x+2}$ vertices are protected at each step (protecting at most $2$ vertices at each step), the fire can be contained by time step $12x+7$ for $x \in \mathbb{N}$.  A similar result was independently proven in~\cite{Ng}; they additionally proved that any ratio greater than $\frac{3}{2}$ can be achieved. In both~\cite{3/2} and~\cite{Ng}, the authors also conjectured that if firefighters protect an average of exactly $\tfrac{3}{2}$ vertices at each step, a fire on an infinite Cartesian grid cannot be contained; this was later proven by~\cite{Feldheim}.  By contrast, we next show that in the Pyro game, one firefighter can contain a fire on the Cartesian grid.  

To do this, we provide a simple algorithm to describe the vertices protected at each step $t>0$ that leave the pyro unable to burn a vertex distance $48$ from the vertex $(0,0)$ burned at $t=0$.  The simplicity of the algorithm results from giving the pyro extra ``power'' for the first $44$ steps and by reducing the firefighter's ``power" at every step.  Specifically, for the first $44$ steps, we allow the pyro to burn from {\it each} burned vertex to {\it every} neighbouring vertex.  We additionally impose restrictions on the firefighter and permit the firefighter to only protect vertices that are exactly distance $48$ from the original burned vertex for all $t \geq 1$. The algorithm for protecting vertices, presented below, will prevent the pyro from ever burning a vertex distance $48$ from the original burned vertex.

Let $D_{48}$ be the set of vertices distance $48$ from $(0,0)$ on the infinite Cartesian grid and let $T$ be the set of vertices $(x,y) \in D_{48}$ where $$(-5 \leq x \leq 5) \wedge \Big((43 \leq y \leq 48) \vee (-48 \leq y \leq -43)\Big) \text{~or}$$ $$(-5 \leq y \leq 5) \wedge \Big((43 \leq x \leq 48) \vee (-48 \leq x \leq -43)\Big)$$ as partially illustrated in Figure~\ref{fig:CartT} by the shaded area.

\begin{figure}[htbp]

\[ \includegraphics[width=0.5\textwidth]{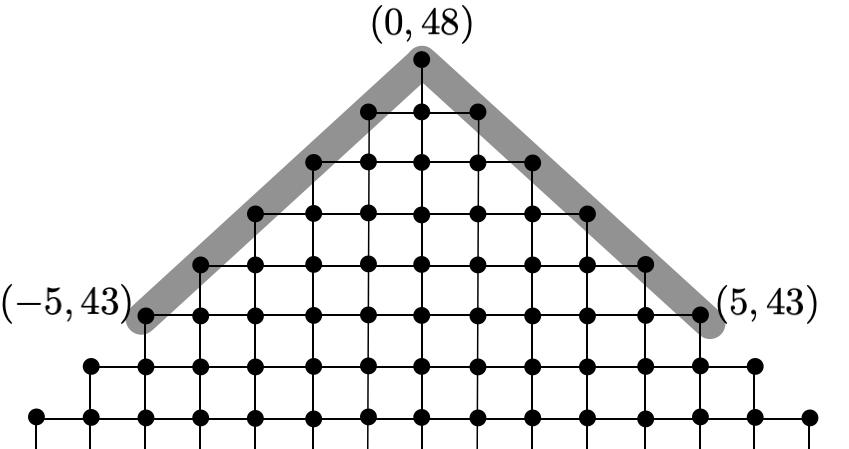} \] 

\label{fig:CartT}

\caption{A subset of vertices of $T$ on the Cartesian grid.}

\end{figure}

If vertex $(x,y)$ is distance $d > 0$ to the nearest vertex in $D_{48}$, then the {\bf threat set} of $(x,y)$, denoted $Th((x,y))$ is defined to be the set of vertices in $D_{48}$ that are distance $d$ from $(x,y)$.  That is, $$Th((x,y)) = \{ (u,v) \in D_{48} ~|~ d((x,y),(u,v))=d\}.$$  We define the {\bf center} of a threat set $Th((x,y))$ to be the vertex (or vertices) in $Th((x,y))$ with the median $x$-coordinate(s).  For example, if $(x',y') \in D_{44}$ with $x' \geq 1$ and $y' \geq 1$ then $$Th((x',y')) = \{(x',y'+4),(x'+1,y'+3),(x'+2,y'+2),(x'+3,y'+1),(x'+4,y')\}$$ with center vertex $(x'+2,y'+2)$.  If $(x',y') \in D_{45}$ with $x' \geq 1$ and $y' \geq 1$ then $$Th((x',y')) = \{(x',y'+3),(x'+1,y'+2),(x'+2,y'+1),(x'+3,y')\}$$ with center vertices $(x'+1,y'+2)$ and $(x'+2,y'+1)$. 

\begin{algorithm}\label{alg:Cart}{\bf The Pyro game with one firefighter on an infinite Cartesian grid.}\medskip

\noindent 1) At step $0$, the pyro chooses a vertex to burn; label this vertex $(0,0)$.\medskip

\noindent 2) For $1 \leq t \leq 44$, at step $t$, the firefighter protects an unprotected vertex of $T$ and then the pyro burns from every vertex distance $t-1$ from the origin, to every vertex distance $t$ from the origin.\medskip

\noindent 3) At the beginning of step $45$, the firefighter protects the last unprotected vertex of $T$.\medskip

\noindent 4) For $t \geq 45$, during step $t$, the pyro will choose one burning vertex $(x,y)$ and spread from $(x,y)$ to all unprotected unburned neighbours:

\begin{enumerate}[a)] \item if the pyro burns from a vertex in $D_{44}\cup D_{45}\cup D_{46}$ during step $t$, then during step $t+1$, the firefighter will protect an unprotected vertex of $Th((x,y))$, closest to the center, arbitrarily breaking ties.  If all vertices in $Th((x,y))$ were protected prior to step $t+1$, the firefighter protects an arbitrary vertex in $D_{48}$.\medskip

\item if the pyro burns from a vertex $(x,y) \in D_{47}$ during step $t$, then if~ $\exists (u,v) \in N((x,y))\backslash D_{48}$ that was unburned prior to step $t$, protect a vertex in $Th((u,v))$ closest to the center, arbitrarily breaking ties.  If no such $(u,v)$ exists or every vertex in $Th((u,v))$ was protected prior to step $t+1$, protect an arbitrary vertex in $D_{48}$.\end{enumerate}\end{algorithm}

\begin{theorem}Using Algorithm~\ref{alg:Cart}, one firefighter can prevent the pyro from burning any vertex distance $48$ from the original burned vertex $(0,0)$ on the infinite Cartesian grid.\end{theorem}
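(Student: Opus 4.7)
The plan is to analyze Algorithm~\ref{alg:Cart} in two distinct phases, and in each phase maintain a geometric invariant about which vertices of $D_{48}$ have been protected.

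First, I would verify the \emph{warm-up phase} $t=1,\ldots,45$: at its conclusion every vertex of $T$ is protected and the burned set is exactly $D_{\leq 44}:=\{(x,y):|x|+|y|\leq 44\}$. This follows directly from Steps~2 and~3, since the strengthened pyro in Step~2 simulates classical Firefighter-style spread (which burns $D_{\leq t}$ by step $t$) and every firefighter protection in this phase lies in $T\subseteq D_{48}$, disjoint from the pyro's reach throughout. One only needs to check that the number of protection opportunities offered matches $|T|$ and that Step~3 is well-defined.

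For the \emph{steady-state phase} $t\geq 46$, I would prove by induction on $t$ the following structural invariant, one for each of the four quadrants of $D_{48}$: the set of protected vertices on that quarter-arc is a contiguous subarc of $D_{48}$, and this subarc contains $Th((x,y))$ for every burned vertex $(x,y)$ in the associated quadrant at distance at most $4$ from $D_{48}$. In words, the protected vertices on $D_{48}$ form a growing barrier that always covers every threat set emanating from a burned interior vertex. Once this invariant is established, no vertex of $D_{48}$ can ever be burned, because to burn such a vertex the pyro would have to spread from an interior vertex whose threat set is already fully protected. The inductive step uses the explicit descriptions of threat sets given just above the algorithm statement---a vertex in $D_{44},D_{45},D_{46},D_{47}$ has a threat set of size $5,4,3,2$ respectively---combined with the observation that the centre of $Th((x,y))$ is always shared with the threat sets of neighbouring interior vertices, so protecting the centre of (or the closest-to-centre unprotected vertex of) $Th((x,y))$ monotonically extends the barrier by exactly one vertex per step.

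The principal obstacle is the case analysis in Step~4(b), where the pyro burns from a vertex $(x,y)\in D_{47}$ and the newly-burned neighbour $(u,v)$ may lie in $D_{46}$ with a strictly larger threat set than $Th((x,y))$, so that the new threat set protrudes past the existing barrier. Here the firefighter instead targets the closest-to-centre unprotected vertex of $Th((u,v))$, and one must verify that the head start of eleven protected vertices per arm of $T$ around each of the four cardinal directions keeps the barrier one step ahead of the pyro's frontier. A secondary technical issue is the coordination between the four quadrants: the pyro can alternate among them to try to split the firefighter's attention, but because the barrier in each quadrant grows monotonically and the pyro issues only one spread per step, an amortization argument shows the invariant survives regardless of the pyro's quadrant choices. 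The tie-breaking rule (when $Th((x,y))$ has two centres, which happens precisely when $(x,y)\in D_{45}\cup D_{47}$) is immaterial by the reflective symmetry of the configuration, so either choice maintains the invariant.
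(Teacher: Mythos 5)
Your overall architecture (a protected ``barrier'' on $D_{48}$ tracked by an invariant about threat sets) is in the right spirit, but the invariant you propose is false, both at the base case and inductively, so the induction cannot close. First, at the end of the warm-up phase \emph{every} vertex of $D_{44}$ is burned, and each such vertex is at distance $4$ from $D_{48}$ with a threat set of size $5$; the union of these threat sets is all of $D_{48}$, yet only the roughly $44$ vertices of $T$ are protected. So the clause ``the subarc contains $Th((x,y))$ for every burned vertex $(x,y)$ at distance at most $4$ from $D_{48}$'' already fails before the steady-state phase begins. It also cannot be restored inductively: a single pyro move from a vertex of $D_{44}$ newly burns up to two vertices of $D_{45}$, each with a threat set of size $4$ not contained in the old barrier, while the firefighter adds only one protected vertex. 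Second, the ``contiguous subarc'' clause fails immediately as well: the first steady-state protection in a quadrant is the centre of some $Th((x,y))$, which in general is nowhere near the two arms of $T$ sitting at the ends of that quarter-arc, so the protected set on the arc is not an interval.

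The repair, which is essentially what the paper does, is to restrict attention to the vertices the pyro has actually \emph{burned from} (not all burned vertices), and to make the invariant graded in the distance to $D_{48}$: if the pyro has burned from $(x,y)\in D_{44+j}$ by step $t$, then by step $t+1$ the $j+1$ centre-most vertices of $Th((x,y))$ are protected. Since consecutive sources in any chain of pyro moves approaching $D_{48}$ have nested-up-to-one-vertex threat sets (the centre of a threat set lies in the threat sets of the relevant neighbours), one firefighter response per pyro move suffices to advance this count by one, and full coverage ($3$ of $3$) is reached exactly when a source lies in $D_{46}$ --- which is all that is needed, because the $D_{48}$-neighbours of any burned $D_{47}$ vertex lie inside the threat set of the $D_{46}$ vertex it was burned from. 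The paper packages this as a minimal-counterexample argument (assume step $t+1$ is the first breach of $D_{48}$ and walk backwards along the chain of sources), which also disposes of your worry about the pyro alternating among quadrants without any amortization: each response is tied to the specific move that triggered it, so interleaving is harmless. As written, your proposal names the right mechanism (centres shared between adjacent threat sets) but attaches it to an invariant that is provably violated, so there is a genuine gap.
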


\begin{proof}  For a contradiction, suppose the pyro burns from a vertex in $D_{47}$ during step $t+1$ to spread to a vertex of $D_{48}$.  We further assume that step $t+1$ is the least step by which a vertex of $D_{48}$ can be burned.  Then the pyro must have burned from a vertex $(x,y) \in D_{46}$ during step $t$.  By 4) a), the firefighter protects one vertex from the set $Th((x,y))$ during step $t+1$ (or an arbitrary vertex of $D_{48}$ if the vertices in $Th((x,y))$ were all previously protected).  Using Algorithm~\ref{alg:Cart}, we next show that by the end of step $t+1$, all vertices in the set $Th((x,y))$ will be protected which proves that if the pyro burns from a vertex in $N((x,y)) \cap D_{47}$ during step $t+1$, no vertex of $D_{48}$ will be burned during step $t+1$.\medskip

By 2) and 3), after the firefighter has protected a vertex during step $45$, every vertex of $T$ will have been protected.  Observe that for $(x,y) \in D_{46}$ where $-3 \leq x \leq 3$ or $-3 \leq y \leq 3$, by 2) and 3), every vertex in the set $Th((x,y))$ protected by the end of step $45 < t+1$.  If $(x,y) \in D_{46}$ where $x = \pm 4$ or $y=\pm 4$, then by 2) and 3), two of the three vertices of $Th((x,y))$ are protected by the end of step $45< t+1$ and by 4) a), all vertices in $Th((x,y))$ are protected by the end of step $t+1$.  Thus, we may assume $x \notin \{-4,-3,-2,-1,0,1,2,3,4\}$ and $y \notin \{-4,-3,-2,-1,0,1,2,3,4\}$ for the remainder of the proof.  This implies $|Th((x,y))| = 3$.

Without loss of generality, suppose $x \geq 5$ and $y \geq 5$ and consider the vertex in $N((x,y)) = \{(x-1,y),(x,y+1),(x+1,y),(x,y-1)\}$ that the pyro burned from most recently prior to step $t$.  If the pyro burned from $(x,y+1)$ (or $(x+1,y)$) during some step $t^* < t$ then by the end of step $t^*+1$, vertices $(x,y+2)$ and $(x+1,y+1)$ (or $(x+1,y+1)$ and $(x+2,y)$) were protected; otherwise the pyro could have burned a vertex of $D_{48}$ during step $t^*+1 < t+1$ which contradicts our assumption that the earliest step by the pyro can burn a vertex of $D_{48}$ is step $t+1$.  

Suppose the pyro burned from $(x-1,y)$ during some step $t^*<t$.  Note $Th((x-1,y)) = \{(x,y+3),(x+1,y+2),(x+2,y+1),(x+3,y)\}$.  If both center vertices $(x+1,y+2),(x+2,y+1)$ were protected prior to step $t^*+1$, then the firefighter protects $(x,y+3)$ or $(x+3,y)$ during step $t^*+1$.  If the firefighter protects $(x,y+3)$ during step $t^*+1$, then the firefighter protects $(x+3,y)$ during step $t+1$ and so that all vertices of $Th((x,y))$ are protected before the pyro burns during step $t+1$. 

Thus, we assume at most one of $(x+1,y+2),(x+2,y+1)$ was protected prior to step $t^*+1$.  If the pyro burned from $(x-1,y+1)$ during step $t^{**}< t^*$, then the vertices in $Th((x-1,y+1))$ are all protected by the end of step $t^{**}+1$, otherwise the pyro could have burned a vertex of $D_{48}$ prior to step $t+1$.  By 4) a), the vertices of $Th((x,y)) = Th((x-1,y+1)) \cup \{(x,y+2)\}$ are protected by the end of step $t+1$.

If the pyro burned from $(x-2,y)$ during step $t^{**}<t^*$, the by 4) a), as $(x,y+2)$ is the center of $Th((x-2,y))$, it is protected by the end of step $t^{**}+1$.  As a result, by 4) a), $(x+1,y+1) \in Th((x-1,y))$ is protected by the end of step $t^*+1$ and consequently, $(x+2,y) \in Th((x,y))$ is protected by the end of step $t+1$.  Observe then, that the vertices of $Th((x,y)) = \{(x,y+2),(x+1,y+1),(x+2,y)\}$ are all therefore protected by the end of step $t+1$.   A very similar argument shows that if the pyro burned from $(x-1,y-1)$ during step $t^{**}<t^*$ then the vertices of $Th((x,y))$ are all protected by the end of step $t+1$.

The argument to show that if the pyro burned from $(x,y-1)$ instead of $x-1,y)$ during some step $t^*<t$ then all vertices of $Th((x,y))$ are protected by the end of step $t+1$ is extremely similar to the previous case and has therefore been omitted.\end{proof}

Though we proved that one firefighter can prevent the pyro from burning a vertex distance $48$ from the original burned vertex, this is far from optimal and we conjecture a much stronger result.\footnote{The authors claim to have a proof of the conjecture, but as it is an extremely long and tedious proof, they do not believe it is worth publishing and hope a more clever proof can be found.}

\begin{conjecture} One firefighter can prevent the pyro from burning any vertex distance $7$ from the original burned vertex $(0,0)$ on the infinite Cartesian grid.\end{conjecture}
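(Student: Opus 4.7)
The plan is to follow the reactive threat-set framework of Algorithm~\ref{alg:Cart} but with $D_7$ in place of $D_{48}$ and with essentially no buffer. First I would observe that the pyro's frontier can advance by at most one unit in $L_1$-distance per step, so no vertex of $D_7$ is reachable before step $7$; this gives the firefighter seven moves of lead time. For each burned vertex $v$ at distance $d<7$ from the origin, I would define its threat set $Th(v)\subseteq D_7$ as the vertices of $D_7$ that lie on a shortest path from $v$ that moves away from the origin at every step, which is an arc of $D_7$ of length at most $7-d+1$. The firefighter's strategy would then be analogous to step~4a) of Algorithm~\ref{alg:Cart}: after the pyro burns from $v$ at step $t$, the firefighter at step $t+1$ protects the unprotected vertex of $Th(v)$ nearest its center, breaking ties toward the symmetry axis of the current burned set.

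The main inductive claim would be that after the firefighter's move at step $t$, every vertex in $D_7$ that the pyro could reach within $7-t$ further steps from the currently burned set is already protected. The base cases $t=1,\dots,7$ would need to be verified explicitly, exploiting the fact that the pyro has few meaningful options in the early game, and that its first few spreads are highly constrained by the $4$-regular structure around $(0,0)$. The inductive step would split into cases depending on which vertex $(x,y)$ the pyro burned from at step $t$ and in which quadrant it lies; as in the proof accompanying Algorithm~\ref{alg:Cart}, a secondary induction on the pyro's recent history would handle the ``corner'' vertices of $D_7$ such as $(\pm 7,0)$ and $(0,\pm 7)$, which can be threatened from two sides.

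The main obstacle is the complete absence of slack. For $d=48$ the firefighter could afford to pre-protect the auxiliary set $T$ during the first $45$ steps and still retain a large buffer, so that each individual threat set had several protected vertices by the time it mattered. For $d=7$ no such buffer exists: every protected vertex must be placed with perfect foresight, anticipating the pyro's branching choices. Because the pyro can spread from a high-degree frontier vertex to create several overlapping threat sets in a single move, the firefighter must choose each protection so as to neutralise multiple future threats simultaneously. Since each of the $28$ vertices of $D_7$ could become the pyro's target under different early sequences, the verification naturally breaks into many ad hoc subcases, and I would expect most of the proof length to be spent on the first three or four pyro moves rather than on the steady-state behaviour. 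This matches the authors' remark that the proof is long and tedious; a shorter argument would likely replace the direct induction by an amortised potential-function argument tracking the number of unprotected vertices of $D_7$ still reachable from the burned set.
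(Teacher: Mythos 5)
There is nothing in the paper to compare your attempt against: the statement is a \emph{conjecture}, and the authors explicitly do not include a proof (a footnote says their own argument is long, tedious, and unpublished). So the only question is whether your proposal stands on its own, and as written it does not: it is a plan, not a proof, and its central inductive claim does not do the work you assign to it. You claim the invariant ``after the firefighter's move at step $t$, every vertex of $D_7$ that the pyro could reach within $7-t$ further steps from the currently burned set is already protected.'' In the Pyro game the fire advances by at most one unit of distance per step, so after the firefighter's move at step $t$ every burned vertex is within distance $t-1$ of the origin, hence at distance at least $8-t > 7-t$ from $D_7$; your invariant is therefore vacuously true for \emph{every} strategy, including protecting nothing, and cannot imply containment. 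If instead you intend the reachability window to include the pyro's step-$t$ spread (or any extra slack), the claim becomes false already for small $t$: with the pyro pushing straight out, after two steps the burned set is the radius-one ball and a dozen vertices of $D_7$ lie within the stated reach, while only two vertices have been protected. Either way the induction as formulated collapses, and no corrected invariant is proposed.

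The substantive difficulty is also exactly the part you defer. The analysis of Algorithm~\ref{alg:Cart} works because the $44$-vertex corner set $T$ is fully pre-protected before the reactive phase begins, so every threat set the firefighter must complete reactively has size three and is partially protected by history; for $D_7$ there is no analogue of $T$, and the threat sets that matter are different in kind: once the pyro burns a vertex of $D_6$ (possible by the end of step $6$), \emph{both} (or, on an axis, all three) of its neighbours in $D_7$ must already be protected before the pyro's next spread, since the firefighter gets only one protection in between. With only seven protections available by then and $28$ vertices in $D_7$, the whole proof is the anticipation argument showing the firefighter can always have the right two or three vertices in place against every branching of the pyro's first six or seven spreads --- precisely the case analysis you acknowledge but do not carry out, and which the authors themselves describe as extremely long. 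So the proposal identifies a plausible framework but contains a genuine gap: a flawed key invariant and an entirely missing verification of the early game, which is where the entire content of the conjecture lies.
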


In the (original) Firefighter problem, two firefighters are necessary to contain a fire on the 2-dimensional Cartesian grid, but are not sufficient to contain a fire on the 3-dimensional Cartesian grid: for the 3-dimensional Cartesian grid $5$ firefighters are necessary~\cite{hartke}.   
\begin{question}\label{thm:3d}
What is the minimum integer $f$ such that if firefighters protect $f$ vertices at each step, the firefighters can contain the pyro on a 3-dimensional Cartesian grid.
\end{question}

\section{The infinite strong grid}\label{sec:pyroStrong}

\subsection{The Pyro game on an infinite strong grid}\label{pyro:strong}

In this section, we show that in the Pyro game, two firefighters (protecting two vertices at each step $t>0$) suffice to contain a fire on the strong grid.  To do this, we provide a simple algorithm to describe the vertices protected at each step $t>0$ that leave the pyro fire unable to burn a vertex distance $29$ from the vertex $(0,0)$ burned at $t=0$.  Like Algorithm~\ref{alg:Cart}, the simplicity of the algorithm results from giving the pyro extra ``power'' for the first $25$ steps and by reducing the firefighters ``power" at every step.  In particular, for the first $25$ steps, we allow the pyro to burn from {\it each} burned vertex to {\it every} neighbouring vertex.  We additionally impose restrictions on the firefighters and allow them to only protect vertices that are exactly distance $29$ from the original burned vertex. 

Let $T$ be the set of vertices $(x,y)$ where \begin{itemize} \item $x= \pm 29$ and $(23 \leq y \leq 29) \vee (-29 \leq y \leq -23)$; or 

\item $y = \pm 29$ and $(23 \leq x < 29) \vee (-29 < x \leq 23)$. \end{itemize} 

Set $T$ is partially illustrated in Figure~\ref{fig:strongT} by the shaded area.  The algorithm below dictates that that during each of the first $26$ steps, the firefighters protect $2$ of the $52$ vertices of $T$ and for $t > 26$, describes how the firefighters protect vertices of set $D = \{(x,y): dist((x,y),(0,0))=29\} \backslash T$.

\begin{figure}[htbp]
\[ \includegraphics[width=0.375\textwidth]{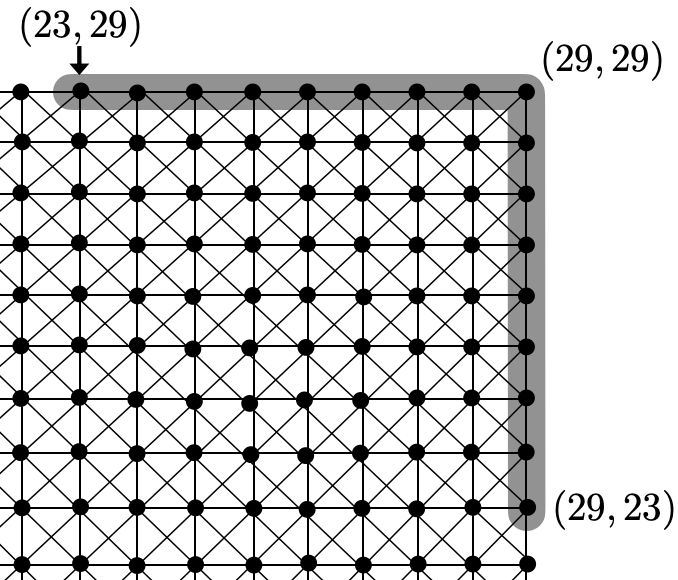} \]
\caption{A subset of vertices of $T$ on the strong grid.} 

\label{fig:strongT}
\end{figure}

\begin{algorithm}\label{alg:strong}{\bf The Pyro game with two firefighters on an infinite strong grid.}\medskip

\noindent 1) At step $0$, the pyro chooses a vertex to burn; label this vertex $(0,0)$.\medskip

\noindent 2) For $1 \leq t \leq 25$, at step $t$, the firefighters protect any two unprotected vertices of $T$ and then the fire burns from every vertex distance $t-1$ from the origin, to every vertex distance $t$ from the origin.\medskip

\noindent 3) At the beginning of step $26$, the firefighters protect the last two unprotected vertices of $T$.\medskip

\noindent 4) For $t \geq 26$, during step $t$, the pyro will choose one burning vertex $(x,y)$ and spread from that vertex to all unprotected unburned neighbours.  Observe that $(x+i,y) \oplus (x,y+i) \in D$ for some $i \in \{-4,-3,-2,-1,1,2,3,4\}$ since $|x| \geq 25$ and $|y| \geq 25$. During during step $t+1$, the firefighters will proceed as follows:\medskip

\hspace{0.45in}a) If $(x,y+i) \in D$, the firefighters protect two unprotected vertices of $$A_t = \{ (x-2,y+i),(x-1,y+i),(x,y+i),(x+1,y+i),(x+2,y+i)\}$$ that are closest to $(x,y+i)$, arbitrarily breaking ties.  (If there is only one unprotected vertex in $A_t$, the firefighters protect that vertex and an arbitrary vertex of $D$; and if there are no unprotected vertices in $A_t$, the firefighters protect two arbitrary vertices of $D$.)\medskip

\hspace{0.45in}b) If $(x+i,y) \in D$, the firefighters protect two unprotected vertices of $$B_t = \{ (x+i,y-2),(x+i,y-1),(x+i,y),(x+i,y+1),(x+i,y+2)\}$$ that are closest to $(x+i,y)$, arbitrarily breaking ties.  (If there is only one unprotected vertex in $B_t$, the firefighters protect that vertex and an arbitrary vertex of $D$; and if there are no unprotected vertices in $B_t$, the firefighters protect arbitrary vertices of $D$.)\end{algorithm}

We next prove the algorithm will prevent any vertex distance $29$ from being burned.

\begin{theorem}By following Algorithm~\ref{alg:strong}, in the Pyro game two firefighters suffice to contain a fire on the strong grid.\end{theorem}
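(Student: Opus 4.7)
The plan is to adapt the contradiction-based argument from the Cartesian proof. Suppose, for contradiction, that some vertex of $D_{29}$ is eventually burned, and let $t+1$ be the earliest such step. By steps 2 and 3 of Algorithm~\ref{alg:strong}, every vertex of $T$ is protected by the beginning of step $27$, so the $D_{29}$-vertex burned at step $t+1$ must lie in $D = D_{29} \setminus T$. Since every strong-grid neighbor of $D_{29}$ lies in $D_{28} \cup D_{29} \cup D_{30}$ and no vertex of $D_{29} \cup D_{30}$ is burned before step $t+1$ by minimality, the pyro must burn at step $t+1$ from some vertex $(a,b) \in D_{28}$. By the four-fold symmetry of $D_{28}$ we may assume $a = 28$; moreover if $|b| \ge 24$ then all three $D_{29}$-neighbors of $(28,b)$ lie in $T$ and are already protected, so we may restrict to $|b| \le 23$. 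The hardest subcase is $|b| \le 21$, where the three $D_{29}$-neighbors of $(28,b)$ are $(29, b-1), (29, b), (29, b+1)$, all lying in $D$; the cases $|b| \in \{22, 23\}$ follow by the same argument with one or two fewer threat vertices.

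The heart of the proof is to show that $(29, b-1), (29, b), (29, b+1)$ are all protected by the beginning of step $t+1$, which yields the desired contradiction. I would trace backwards the chain of pyro moves producing the burning of $(28,b)$: since $D_{29}$ has not been burned, $(28,b)$ was burned at some step $s_3 \le t$ when the pyro burned from a neighbor $(x_3, y_3) \in D_{27} \cup D_{28}$, and inductively $(x_3, y_3)$ was itself burned at an earlier step $s_2 < s_3$ from some $(x_2, y_2) \in D_{26} \cup D_{27}$, and so on back to a vertex $(x_0, y_0) \in D_{k_0}$ with $k_0 \le 25$ (automatically burned during the warm-up phase). Consecutive chain vertices being strong-grid neighbors forces $|y_{j+1} - y_j| \le 1$, so all $y_j$ lie in $[b-3, b+3]$. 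In the worst case for the firefighter the chain contains exactly one vertex from each of $D_{25}, D_{26}, D_{27}$, producing three relevant firefighter reactions at steps $s_0 + 1, s_1 + 1, s_2 + 1$; any longer chain only gives the firefighter more reactions.

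By Case (b) of the algorithm, each reaction at step $s_j + 1$ protects the two unprotected vertices of the five-vertex window $B_{s_j} = \{29\} \times [y_j - 2, y_j + 2]$ closest to its center $(29, y_j)$. The remaining task is a finite case analysis on the sequences $(y_0, y_1, y_2, y_3)$ subject to $|y_{j+1} - y_j| \le 1$ and $y_3 \in \{b-1, b, b+1\}$, verifying that the three target vertices always end up among the at most six protected. I expect the main obstacle to be the pyro's ``spreading'' strategies in which the $y_j$ alternate so as to push firefighter responses toward the edges of the windows, potentially combined with adversarial tie-breaking that protects $(29, y_j \pm 2)$ before $(29, y_j \pm 1)$. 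This is handled by observing that the greedy closest-unprotected rule, applied across three overlapping windows whose centers all lie within $[b-3, b+3]$, always fills in central vertices before edge vertices; since each window's two protections have $y$-coordinates within $1$ of the window's center, the union of the three sets of protections necessarily covers $(29, b-1), (29, b), (29, b+1)$. A symmetric argument via Case (a) handles the case where the pyro advances along the $y$-direction, completing the proof.
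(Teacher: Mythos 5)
Your overall strategy is the same as the paper's: trace the chain of pyro moves backwards from the hypothetical breach and argue that the firefighters' greedy window-protections, accumulated along that chain, cover the threatened vertices of $D$ before the pyro can reach them. The paper organizes this as a case analysis on the distance $i$ from the currently burning vertex to $D$ (looking back one or two burns in the cases $i=3,2,1$), while you organize it as a contradiction on the earliest breach step with a full chain back to $D_{25}$; these are the same argument in different clothing.

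There is, however, a genuine gap in the two reductions you use to make the deferred case analysis finite. First, the claim that ``each window's two protections have $y$-coordinates within $1$ of the window's center'' is false: the rule protects the two \emph{unprotected} vertices of the window closest to its center, so once the central vertices have already been protected by earlier reactions, the new protections land at distance $2$ from the center or degenerate into arbitrary vertices of $D$. This happens precisely in the adversarial, alternating scenarios you single out, so it cannot be the tool that dispatches them. Second, the reduction to a chain containing exactly one vertex from each of $D_{25},D_{26},D_{27}$ is not justified by ``a longer chain only gives the firefighter more reactions'': the pyro can let the chain wander laterally inside $D_{27}\cup D_{28}$ (e.g.\ marching along $(28,b+9),(28,b+8),\dots,(28,b)$), in which case the extra reactions are centered far from the breach point and trail behind the fire front, and it is exactly the question whether the last few reactions still cover $(29,b-1),(29,b),(29,b+1)$ in time. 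Settling this requires an inductive invariant along the lines of ``whenever the pyro burns from a vertex at distance $i\le 4$ from $D$ whose projection onto $D$ is $(29,c)$, a prescribed block of vertices of $D$ centered at $(29,c)$ (growing as $i$ decreases) is protected by the end of the next firefighter move''; this is what the paper's case analysis on $i\in\{4,3,2,1\}$ is implicitly establishing. Without such an invariant, your finite case analysis on $(y_0,y_1,y_2,y_3)$ does not exhaust the pyro's plays.
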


\begin{proof}We will use Algorithm~\ref{alg:strong} to prove a stronger result, namely that if the pyro is given additional ``power'' to burn from each burning vertex to each neighbouring vertex during the first $25$ steps, two firefighters can prevent the pyro from burning any vertex distance $29$ from the original burned vertex.\medskip

Consider any step $t \geq 26$: during step $t$, the pyro will choose one burning vertex $(x,y)$ and burn from $(x,y)$ to all neighbouring vertices.  Since we assumed that every vertex within distance $26$ of the original burned vertex $(0,0)$ is burned by the end of step $26$, we know that either $(x,y+i) \in D$ or $(x+i,y) \in D$ for some $i \in \{-4,-3,-2,-1,1,2,3,4\}$.  And from the definitions of $T$ and $D$, exactly one of $(x,y+i)$, $(x+i,y)$ is in $D$.\medskip

Consider 4) a) of Algorithm~\ref{alg:strong}, where $(x,y+i) \in D_t$ and wlog, suppose $i \in \{1,2,3,4\}$.\medskip  

If $i=4$, then after the firefighters have moved during step $t+1$, clearly at least two vertices of $A_t$ have been protected.\medskip
 
If $i=3$, then consider the neighbour(s) of $(x,y)$ that were burned at some step(s) prior to $t$.  The Pyro burned from least one of $(x-1,y),(x-1,y+1),(x,y+1),(x+1,y+1),(x+1,y),(x+1,y-1),(x,y-1),(x-1,y-1)$ at an earlier step.  But this would mean at least one of $(x-1,y+3),(x,y+3),(x+1,y+3)$ was protected at an earlier step.  Thus, after the firefighters have protected vertices during step $t+1$, vertices $(x-1,y+3),(x,y+3),(x+1,y+3)$ have all been protected.\medskip

If $i=2$, then consider the neighbour(s) of $(x,y)$ that were burned at some step(s) prior to $t$.  The Pyro burned from least one of $(x-1,y),(x-1,y+1),(x,y+1),(x+1,y+1),(x+1,y),(x+1,y-1),(x,y-1),(x-1,y-1)$ at an earlier step; call this vertex $(x',y')$ and suppose it was burned at step $t'$.   But this would mean at least two of $A_t$ were protected at an earlier step than $t+1$.  And, the Pyro burned some vertex in the neighbourhood of $(x',y')$ at some step $t''<t'$.  If $x'' \in \{x-1,x,x+1\}$ then at least two vertices of $A_t$ were protected at step $t''+1$.  If $x'' \in \{x-2,x+2\}$, then at least one of $(x-2,y+i)$, $(x-2,y+i)$ was protected at step $t''+1$.  In any event, after the firefighters protect vertices during step $t+1$, every vertex of $A_t$ has been protected.\medskip

If $i=1$, then consider the neighbour(s) of $(x,y)$ that were burned at some step(s) prior to $t$.  The Pyro burned from least one of $(x-1,y),(x+1,y),(x+1,y-1),(x,y-1),(x-1,y-1)$ at an earlier step; call this vertex $(x',y')$ and suppose it was burned at step $t'$.  But this would mean at least two of $A_t$ were protected by step $t'+1$.  And, the Pyro burned some vertex (call it $(x'',y'')$) in the neighbourhood of $(x',y')$ at some step $t''<t'$.  If $x'' \in \{x-1,x,x+1\}$ then at least two vertices of $A_t$ were protected at step $t''+1$.  If $x'' \in \{x-2,x+2\}$ then at least one of $(x-2,y+i),(x+2,y+i)$ was protected at step $t''+1$.  In any event, after the firefighters protect vertices during step $t+1$, every vertex of $A_t$ has been protected.\medskip

Thus, we can see that if the pyro burns from some vertex $(x,y)$ at step $t$ where $i \in \{-2,-1,1,2\}$, then the pyro cannot burn a vertex of $D$ in step $t+1$.  Since 4) b) of Algorithm~\ref{alg:strong} is extremely similar to a), we omit the remainder of the proof to avoid a redundant explanation.\end{proof} 

Though two firefighters suffice to contain a fire on the strong grid, it remains unknown as to whether one firefighter suffices. We conjecture it is not possible. 

\begin{conjecture}In the Pyro game, one firefighter cannot contain a fire on the strong grid.\end{conjecture}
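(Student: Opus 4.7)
The plan is to exhibit a pyro strategy that keeps the fire alive forever against any firefighter strategy. The intuition is that in the strong grid, a single-vertex barrier is porous along diagonals: if the pyro tries to advance from $(x,y)$ to one of its three NE-neighbours $(x+1,y), (x,y+1), (x+1,y+1)$, the firefighter can plug only one of them per step, so at least two burn, and each new burn inherits its own cluster of three NE-neighbours. This diagonal leakage is precisely why Algorithm~\ref{alg:strong} required two firefighters to seal the frontier, and any proof of the conjecture must ultimately rest on it.

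I would first fix the pyro's strategy. A clean choice is to commit to two diagonal fronts --- the NE-most and SW-most burning vertices --- and, at each step, burn from whichever front is (by some tiebreaking rule) more ``productive''. Since the two fronts will be arbitrarily far apart, a single firefighter can defend at most one of them per step, and the other front can advance by at least one unit. To formalize this I would introduce a potential function $\Phi_t$ counting the pairs $(b,v)$ with $b$ burning and $v$ an unburned unprotected neighbour of $b$ (i.e.\ the size of the open boundary of the burned region). A protect-move decreases $\Phi$ by at most $8$ (the strong-grid degree), while a burn-move from a corner vertex on the front removes a few pairs but adds many new ones, since the freshly burned vertices have up to five fresh NE-neighbours. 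Paired with a shape invariant --- that after each pair of steps at least one front still possesses a corner vertex with two unburned unprotected NE-neighbours --- this would force $\Phi_t$ to remain bounded below by a positive constant for all $t$, which already implies the fire is never contained.

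The main obstacle I expect is the case analysis sustaining the shape invariant against an adaptive firefighter. When the firefighter anticipates the pyro's next burn and protects the intended target, the front must wrap around the barrier, and one must verify the new corner vertex still has two fresh neighbours to keep the invariant alive. In the Cartesian grid such a wrap-around \emph{can} be stopped by one firefighter (Algorithm~\ref{alg:Cart}), so the argument must explicitly exploit diagonal adjacency --- most likely by showing that whenever the firefighter blocks the diagonal NE-neighbour of the current tip, the two axial NE-neighbours both burn and each acquires two fresh NE-neighbours of its own, while if the firefighter blocks an axial NE-neighbour, the diagonal vertex burns and provides its own three NE-neighbours. A further subtlety is the early transient before the two fronts have visibly separated: the accounting may have to double-count shared boundary pairs, so I anticipate a short warm-up phase --- analogous in spirit to, but in the opposite direction from, the ``extra power'' granted to the pyro in Algorithms~\ref{alg:Cart} and~\ref{alg:strong} --- during which the invariant is established before the steady-state potential argument takes over.
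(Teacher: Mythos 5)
First, note that the paper does not prove this statement: it is stated as an open conjecture (the authors prove only the complementary upper bound, via Algorithm~\ref{alg:strong}, that two firefighters suffice on the strong grid). So there is no proof in the paper to compare against, and the question is whether your outline amounts to a proof on its own. It does not; it is a plan whose load-bearing steps are exactly the ones left unestablished, and some of them face concrete obstacles rather than mere bookkeeping.

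The central unsupported claim is that ``since the two fronts will be arbitrarily far apart, a single firefighter can defend at most one of them per step, and the other front can advance by at least one unit.'' The firefighter is under no obligation to respond locally to the pyro's last move: all of its moves can be invested in an anticipatory wall far from either current front (this is precisely how the devil traps a king in the Quadraphage/Angel setting, and how the containment algorithms in this paper work --- they protect only vertices at a fixed large distance, ignoring the fire's local position). Your potential function $\Phi_t$ does not see such moves: a protection placed off the current open boundary decreases $\Phi_t$ by $0$ now but caps all future growth through that vertex, so the per-step accounting ``protect removes at most $8$, burn adds many'' cannot by itself force $\Phi_t$ to stay positive, and in any case $\Phi_t>0$ must be shown against \emph{every} firefighter strategy, which is exactly the shape invariant you defer. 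That invariant is the entire difficulty: you acknowledge the wrap-around case analysis as an ``expected obstacle,'' but without it nothing is proved. There is also a strategic worry specific to your two-front plan: because the pyro burns from only one vertex per step, splitting tempo between two distant fronts means each front advances at most every other step, so the firefighter who concentrates on one front enjoys an effective ratio of (at least) two protections per advance there --- and the paper shows two protections per burn \emph{do} suffice to seal a front on the strong grid. So the proposed strategy, as described, is in danger of being containable front-by-front unless the interaction between the two fronts is analyzed much more carefully. In short, the statement remains a conjecture, and your sketch identifies plausible ingredients (diagonal porosity of single-vertex barriers, a boundary potential) but does not close any of the gaps that make it one.
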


\subsection{The Firefighter problem on an infinite strong grid}

In Section~\ref{pyro:strong}, we considered the Pyro game on the infinite strong grid.  In order to better understand the differences between the Pyro game and the Firefighter problem, we next consider the Firefighter problem on the infinite square grid.  We prove that four, but not three firefighters can contain a fire on the infinite strong grid.

\begin{theorem} In the Firefighter problem, firefighters protecting three vertices at each step cannot contain a fire on the infinite strong grid.\end{theorem}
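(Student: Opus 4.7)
The plan is to proceed by contradiction. Suppose three firefighters per step contain the fire on the infinite strong grid; let $T$ be the first step at which the burnt region becomes stable, so the set $P$ of protected vertices satisfies $|P| \leq 3T + O(1)$ and the strong boundary $\partial B_T := N(B_T) \setminus B_T$ (taken with strong-grid adjacency) is entirely contained in $P$. Write $\phi(t) := W(t) + H(t)$ for the semi-perimeter of the bounding box of $B_t$. I would first establish an isoperimetric upper bound: for any strongly connected finite region $S$ with bounding box of width $W$ and height $H$, strong connectivity forces each row and column of the bounding box to meet $S$, so $|\partial S| \geq 2(W+H) - O(1)$, counting one boundary vertex just east and just west of each of the $H$ rows, and one just north and just south of each of the $W$ columns, with at most four shared corner overlaps. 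Combining with $|\partial B_T| \leq |P| \leq 3T + O(1)$ yields $\phi(T) \leq \tfrac{3}{2}T + O(1)$.

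The crux is a matching dynamic lower bound $\phi(T) \geq 3T - O(1)$. The key observation is that, with only three firefighters per step, at most one of the four cardinal sides of the bounding box can be held in a fully blocked state at any given step. Preventing the east side from expanding at step $t+1$ requires protecting every vertex of the frontier column $x_{\max}(t)+1$ within a $y$-range of length at least $\ell_E(t)+2 \geq 3$, where $\ell_E(t)$ denotes the $y$-extent of the burnt vertices in column $x_{\max}(t)$. Moreover, because the two endpoints of the east face each spread one vertex further in the perpendicular direction at each step, the frontier grows by $2$ per step, so maintaining a blocked side costs at least two new protected vertices per step indefinitely. Holding two sides simultaneously would therefore cost at least four firefighters per step for maintenance alone, which exceeds the budget even after accounting for pre-protected vertices (since every pre-protection also consumes the firefighter budget at the step it was placed). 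Consequently, at every step at least three of the four cardinal sides must expand, giving $\phi(t+1) - \phi(t) \geq 3$ for all $t$, which telescopes to $\phi(T) \geq \phi(0) + 3T = 3T + 2$.

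Combining the two bounds yields $3T - O(1) \leq \phi(T) \leq \tfrac{3}{2}T + O(1)$, which fails for $T$ sufficiently large, yielding the desired contradiction; small cases are handled by direct inspection (at $T = 1$, for instance, $|\partial B_1|$ can be shown to exceed $6$ regardless of the firefighter strategy). The main obstacle is rigorously justifying the ``at most one side fully blocked per step'' claim, since a sufficiently clever firefighter might attempt to pre-protect frontier vertices well before they are strictly needed, setting up simultaneous blocks of two or more sides later on. A careful amortized argument is therefore required, tracking for each protected vertex which side's wall it contributes to; the key enabling observation is that the four side-walls live in four pairwise disjoint columns and rows (sharing at most four corner vertices), so each firefighter effort can be uniquely attributed to one side, and the total effort required to maintain any two walls simultaneously against the perpendicular growth of the fire exceeds the $3T$ budget.
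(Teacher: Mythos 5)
Your approach is genuinely different from the paper's (which normalizes the final protected set to a rectangle via a ``corner-popping'' exchange argument and then counts how many wall vertices must already be in place by a specific early step), but it has a real gap at its crux. The claim that ``holding two sides simultaneously would cost at least four firefighters per step for maintenance alone'' is false for two \emph{adjacent} sides. If the east and north sides are both walled off, the burnt region grows only to the west and south, so the east wall needs to be extended only at its southern end and the north wall only at its western end: total maintenance is $2$ vertices per step, not $4$, leaving a spare firefighter each step. In that regime $\phi(t+1)-\phi(t)=2$, so your telescoped lower bound $\phi(T)\geq 3T-O(1)$ fails, and the contradiction $3T\leq \tfrac{3}{2}T$ never materializes. (The theorem is still true in this scenario, but the contradiction has to come from the upper bound side, i.e.\ from $|\partial B_T|\geq 2\phi(T)-O(1)\approx 4T$ exceeding the $3T$ budget --- which is essentially the kind of count the paper performs after its rectangle reduction, at the first time one of the four walls must be complete.) Your stated worry about pre-protection is real but secondary; the adjacent-sides issue breaks the argument even without any pre-protection.

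A smaller but still genuine problem is the justification of the isoperimetric bound $|\partial S|\geq 2(W+H)-O(1)$. The east-of-row, west-of-row, north-of-column and south-of-column witnesses can coincide far more than ``at most four corner overlaps'': for a diagonal-shaped region, the east-of-row witnesses coincide with the south-of-column witnesses along the entire length of the set, so your count as written only certifies roughly $W+H$ boundary vertices. The inequality itself appears to hold (the true boundary of such a set picks up second-off-diagonal vertices your count misses), but it needs a different proof. To salvage your overall strategy you would need (i) a correct proof of the isoperimetric bound and (ii) a replacement for the growth lemma --- for instance an amortized charge that accounts for shared corners, or the paper's route of reducing to a rectangular protected set and then comparing the required wall lengths at time $\min\{k_2,\ell_1\}$ against the cumulative budget $3t$.
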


\begin{proof} For a contradiction, suppose firefighters protecting three vertices at each step suffice to contain a fire on the infinite strong grid.  Note that once the fire can no longer spread, any protected vertex $(x,y)$ that is not adjacent to a burned vertex was unnecessarily protected: if $(x,y)$ had not been protected, the fire would still have been contained.  Additionally, note that any protected vertex $(x,y)$ that is adjacent to a burned vertex but not an unprotected saved vertex was also unnecessarily protected: if $(x,y)$ had not been protected, the fire would still have been contained.  Thus, we may further, assume the firefighters protect a {\it minimal} set of vertices to contain the fire.  Consequently, once the fire has been contained, every protected vertex is adjacent to at least one unprotected saved vertex and at least one burned vertex.  Let $S$ be a minimal set of vertices that can be protected in order to contain the fire.\bigskip

Suppose that once the fire has been contained, $(x,y), (x,y+1),(x+1,y)$ are all in $S$ and $(x+1,y+1)$ was saved but not in $S$.  Then during the step where $(x,y)$ was protected, the firefighters could alternately have protected $(x+1,y+1)$ and still contained the fire.  Thus, $S \backslash \{(x,y)\} \cup \{(x+1,y+1)\}$ is a minimal set of vertices that can be protected in order to contain the fire.  We refer to the action of updating $S$ to $S \backslash \{(x,y)\} \cup \{(x+1,y+1)\}$ as ``popping a corner", as illustrated in Figure~\ref{fig:pop}.  

\begin{figure}[htbp]
\[ \includegraphics[width=0.3\textwidth]{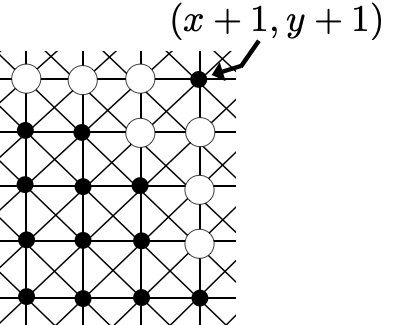}\]
\caption{An example of a set of protected (white) vertices before a corner is popped.} 

\label{fig:pop} 
\end{figure}

Similarly, updating $S$ in each of the three ways below will also be referred to as ``popping a corner'': \begin{itemize} 

\item if once the fire has been contained, $(x,y),(x,y+1),(x-1,y)$ are all in $S$ and $(x-1,y+1)$ was saved but not in $S$ then $S \backslash \{(x,y)\} \cup \{(x-1,y+1)\}$ is a minimal set of vertices that can be protected in order to contain the fire.  

\item if once the fire has been contained, $(x,y),(x,y-1),(x-1,y)$ are all in $S$ and $(x-1,y-1)$ was saved but not in $S$ then $S \backslash \{(x,y)\} \cup \{(x-1,y-1)\}$ is a minimal set of vertices that can be protected in order to contain the fire.  

\item if once the fire has been contained, $(x,y),(x,y-1),(x+1,y)$ are all in $S$ and $(x-1,y-1)$ was saved but not in $S$ then $S \backslash \{(x,y)\} \cup \{(x-1,y-1)\}$ is a minimal set of vertices that can be protected in order to contain the fire.  
\end{itemize}

If we iteratively apply the means of updating the set of protected vertices, effectively ``popping all the corners", then obviously the set of protected vertices will form a rectangle.  More precisely, this implies there exists a strategy for the firefighters contain the fire so that the following sets of vertices are protected: $$\{(u,\ell_1): k_2 \leq u \leq k_1\},~~\{(u,-\ell_2):-k_2 \leq u \leq k_1\},$$ $$\{(-k_2,v): -\ell_2 < v < \ell_1\},~~\{(k_1,v):-\ell_2 < v < \ell_1\}$$ for some non-negative integers $k_1,\ell_1,k_2,\ell_2$.\\

We next obtain a contradiction and show the firefighters cannot actually contain the fire.  Without loss of generality, suppose $k_2 \geq k_1$, $\ell_2 \geq \ell_1$, and $\ell_1 \geq k_1$. 

First,  suppose $\ell_1 \leq k_2$ and consider the vertices burned and protected by the end of step $t = \ell_1$.  To prevent the fire from burning a vertex with $x$-coordinate $k_1+1$ or a vertex with $y$-coordinate $\ell_1+1$, the firefighters must protect at least $2\ell_1+1$ vertices with $x$-coordinate $\ell_1$ and $k_1+\min\{\ell_1,k_2\}=k_1+\ell_1$ vertices with $y$-coordinate $\ell_1$ by the end of step $t=\ell_1$. However, this yields a total of at least $3\ell_1+k+1+1$ vertices and provides a contradiction as the firefighters can only protect $3\ell_1$ vertices by the end of step $\ell_1$.  

Second, suppose $k_2 < \ell_1$.  Then $k_1 \leq k_2 < \ell_1 \leq \ell_2$.  To prevent the fire from burning a vertex with $x$-coordinate $-k_2-1$ or $k_1+1$, the firefighters must protect at least $2k_2+1$ vertices with $x$-coordinate $-k_2$ and $2k_2+1$ vertices with $x$-coordinate $k_1$.  This yields a total of at least $4k_2+2$ protected vertices and provides a contradiction as the firefighters can protect only $3k_2$ vertices by the end of step $k_2$.\end{proof} 

It is easy to see that four firefighters suffice to contain a fire on the infinite strong grid and this is illustrated in Figure~\ref{fig:4ff}.  The square vertex indicates the original burned vertex; the white vertices with numbers indicate protected vertices and the step in which each vertex is protected.  Thus, we can see that four firefighters can contain a fire by the end of step $t=8$.

\begin{corollary} Four firefighters suffice to contain a fire on the infinite strong grid.\end{corollary}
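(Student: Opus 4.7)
The plan is to verify that the explicit strategy illustrated in Figure~\ref{fig:4ff} is both legal and produces containment by the end of step $8$. Let $S_t$ denote the set of four vertices that the firefighters protect at step $t$, chosen so that $S=\bigcup_{t=1}^{8}S_t$ is invariant under rotation by multiples of $90^{\circ}$ about $(0,0)$ and forms a closed contour separating the origin from the rest of the infinite strong grid. By the four-fold symmetry, it suffices to analyse a single quadrant containing $8$ of the $32$ protected vertices.

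I would first check legality: for each $(x,y)\in S_t$ one verifies that its Chebyshev distance from $(0,0)$ is at least $t$, so the fire has not yet reached $(x,y)$ at the moment of protection. This is immediate from reading off the coordinates in the figure. I would then check containment after step $8$: every vertex that is burned at that time has all eight of its strong-grid neighbours either burned or protected, so the fire can spread no further.

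The main obstacle is ruling out a \emph{diagonal leak}. Because adjacency on the strong grid includes the four diagonal directions, a barrier that looks closed in the usual planar sense can still be crossed if two consecutive protected vertices differ by more than $1$ in Chebyshev distance: a burned vertex on the inside would slip diagonally through the gap. The core of the verification is therefore to confirm that consecutive vertices along the contour $S$ are always at Chebyshev distance exactly $1$. Once this property is checked quadrant by quadrant for the pattern exhibited in the figure, the containment claim reduces to a routine neighbour-by-neighbour case check over the $8$ protected vertices of a single quadrant, and the corollary follows by symmetry.
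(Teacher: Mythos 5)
Your overall plan---checking that the explicit strategy of Figure~\ref{fig:4ff} is legal and yields containment by the end of step $8$---is the same approach the paper takes (the paper simply asserts the figure works), but the criterion you place at the core of the verification is wrong. On the strong grid, two protected vertices that are only \emph{diagonally} adjacent, say $(a,b)$ and $(a+1,b+1)$, are at Chebyshev distance exactly $1$ and yet do \emph{not} block the fire: the two remaining cells of that $2\times 2$ block, $(a+1,b)$ and $(a,b+1)$, are themselves adjacent in the strong grid, so a fire occupying one of them spreads directly to the other, slipping ``between'' the two protected vertices. So ``consecutive contour vertices at Chebyshev distance exactly $1$'' does not rule out a diagonal leak; diagonal adjacency is precisely the leaky configuration. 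The condition you actually need is that consecutive wall vertices are \emph{orthogonally} adjacent (they differ by $1$ in exactly one coordinate), i.e.\ the contour is closed as a lattice path in the Cartesian-grid sense --- or, better, carry out directly the check you state earlier, that once the process stops every burned vertex has all eight of its strong-grid neighbours burned or protected, without reducing it to the Chebyshev-distance condition. (Note also that this final check should run over the burned boundary vertices, not over the protected ones.)

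With that repair the argument is fine and matches the paper's: the legality check is sound, since a vertex protected at step $t$ with Chebyshev distance at least $t$ from $(0,0)$ cannot yet be burned (after $t-1$ spreading steps the fire lies within Chebyshev distance $t-1$), and containment then reduces to a finite verification against the figure, using whatever symmetry the depicted pattern actually has rather than an assumed four-fold symmetry.
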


\begin{figure}[htbp]
\[ \includegraphics[width=0.325\textwidth]{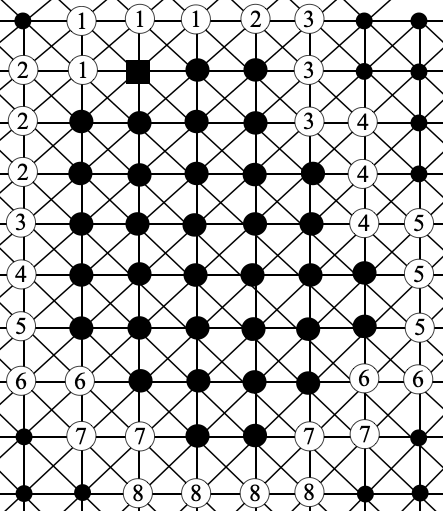}\]
\caption{Containing a fire by protecting four vertices at each step.} 

\label{fig:4ff} 
\end{figure}


\end{document}